\numberwithin{equation}{section}
\theoremstyle{plain}
\newtheorem{theorem}{Theorem}[section]
\newtheorem{prop}[theorem]{Proposition}
\newtheorem{cor}[theorem]{Corollary}
\newtheorem{definition}[theorem]{Definition}
\theoremstyle{definition}
\newenvironment{remark}{\pushQED{\qed} \remarkbase}{\popQED\endremarkbase}
\newcommand{\1}{\mathcal{X}}
\newcommand{\ve}{\varepsilon}
\newcommand{\N}{{\mathbb N}}
\newcommand{\R}{{\mathbb R}}
\renewcommand{\d}{\delta}
\newcommand{\vf}{\varphi}
\newcommand{\dist}{ {\mathrm{dist}} }
\def\ba{\begin{aligned}}
\def\ea{\end{aligned}}
\def\beginm{\begin{multline}}
\def\endm{\end{multline}}
\title{Nonnegative multiplicative controllability\\ for semilinear multidimensional reaction-diffusion equations
}
\author[G. Floridia]{Giuseppe Floridia} 
\address{Giuseppe Floridia\newline
Universit\`a Mediterranea di Reggio Calabria,\newline
 Dipartimento Patrimonio, Architettura, Urbanistica,\newline
Via dell'Universit\`a 25,
 Reggio Calabria, 89124, Italy} 
\email{floridia.giuseppe@icloud.com}
\begin{document}
\markboth{G. Floridia}{Nonnegative multiplicative controllability for semilinear multidimensional reaction-diffusion equations 
}
\maketitle
\begin{abstract}
\noindent In this paper we consider a multidimensional semilinear reaction-diffusion equation 
and we obtain at any arbitrary time an approximate controllability result between nonnegative states using as control term the reaction coefficient, that is via multiplicative controls.
\end{abstract}
\begin{small}

\noindent
\emph{Keywords}:{\;Multidimensional semilinear reaction-diffusion equations; approximate controllability; multiplicative controls; nonnegative states. 
}

\noindent
\emph{AMS Subject Classification}{: \, 93C20, 35K57, 35K58,  35K10}.
\end{small}

\date{} 
\pagestyle{plain}

\section{Introduction} 
Let $n\in\N,$ $\Omega
$ a bounded open subset of $\R^n$, $T>0,$ $Q_T := \Omega \times (0, T),$ and let denote by $(x,t)$ the generic element of the Cartesian product $Q_T.$ 
Let consider 
the following semilinear parabolic Cauchy-Dirichlet boundary value problem 
\begin{equation}\label{PS}
   \begin{cases}
\quad   u_t \; = \; \Delta u
 + \; v(x,t)  u \; + \; f(u)
&\quad  {\rm in} \;\;\;Q_T = \Omega \times (0, T)\,, 
\\
\quad u\mid_{\partial\Omega} = 0,
&\quad\qquad\;\;\; t \in (0, T),
\\
\quad u\mid_{t = 0} \; = u_0\in L^2( \Omega), 
\end{cases}
\end{equation}
where 
$v \in L^\infty (Q_T),$ and the nonlinear term $f:\R\rightarrow\R$ is supposed to be a Lipschitz function, that is, there exists 
a positive constant $L$ such that 
\begin{equation}\label{1.2a}
\mid f(u_1) - f (u_2) \mid \; \leq \; L \mid u_1 - u_2 \mid ,\;\;\;\; \forall u_1,u_2 \in \R,
\end{equation}
moreover, we assume that
\begin{equation}\label{zero}
f(0)=0.
\end{equation}

In this paper we study the global approximate controllability properties of the semilinear problem \eqref{PS},
where the control function, that is the variable coefficient through which we can act on the process, is the reaction coefficient $v(x,t),$ that in literature is called \textit{multiplicative control} (see, e.g., \cite{KB}, \cite{CFK}, \cite{F1}, and \cite{FNT}).

Let us recall briefly the classical {\it well-posedness} of the system \eqref{PS}. So, we need to consider
the standard  
Sobolev spaces:   
\begin{align*}
H^1 (\Omega) &= \{\phi\in L^2 (\Omega) \mid \; \phi_{x} \in L^2 (\Omega)
\}\\
H_0^1 (\Omega) &= \{\phi\in H^1 (\Omega)  \mid 
 \phi|_{\partial\Omega}=0 
\}\\
H^2 (\Omega) &= \{\phi\in H^1 (\Omega)  \mid \; 
\phi_{x_ix_i}\in L^2 (\Omega), \,i=1,\ldots,n 
\}.
\end{align*}
By classical well-posedness results (see, for instance, Theorem 6.1 in \cite{Lad}, pp. 466-467) problem \eqref{PS} 
with initial data $u_0\in L^2(\Omega)$ admits a unique solution 
 $$u\in L^2(0,T; H^1_0 (\Omega) )\cap  C ( [0, T]; L^2 (\Omega)).$$ 
Furthermore, if $u_0\in H^1_0(\Omega)$, then the solution $u$ of problem (\ref{PS}) satisfies
$$u\in H^1(0,T; L^2 (\Omega) )\cap  C ( [0, T];  H^1_0(\Omega))\cap  L^2 ( 0, T;  H^2(\Omega)).$$
The above functional spaces are equipped with the standard norms. Moreover, in this paper we will use $\|\cdot\|,$ $\|\cdot\|_\infty$ and $\langle\cdot,\cdot\rangle,$ 
 instead of
the norms $\|\cdot\|_{L^2(\Omega)}$ and $\|\cdot\|_{L^\infty(Q_T)},$ and the inner product $\langle\cdot,\cdot\rangle_{L^2(\Omega)}$, respectively. 

Now we can present the main result in Theorem \ref{T1}, where we prove that
the system \eqref{PS}  is {\it nonnegatively globally approximately controllable in $L^2(\Omega)$ at any 
time $T>0,$ by means of 
multiplicative controls $v.$} 
We will see that the multiplicative controls $v$ have a simple structure, that is $v$ are piecewise static functions, in the sense of the following definition. 
\begin{definition}\label{piece}
We say that a 
function $v\in L^\infty(Q_T)$ is {\it piecewise static} (or a {\it simple function} with respect to the variable {\it t}), if there exist $m\in\N,$ $v_k(x)\in L^\infty(Q_T)$ and $t_k\in [0,T], \,t_{k-1}<t_k,\, k=1,\dots,m$ with $t_0=0 \mbox{ and } t_m=T,$ 
such that 
\begin{equation}\label{alpha_piece}
v(x,t)=v_1(x)\1_{[t_{0},t_1]}(t)+\sum_{k=2}^m v_k(x){\1}_{(t_{k-1},t_k]}(t),
\end{equation}
 where ${\1}_{[t_{0},t_1]}\,  \mbox{  and  }  \,{\1}_{(t_{k-1},t_k]}$ are the indicator function of $[t_{0},t_1]$ and $(t_{k-1},t_k]$, respectively.
Sometime, for clarity purposes, we will call the function $v$ in \eqref{alpha_piece} a {m-steps} piecewise static function
\end{definition}


Finally, we can give the main result.
\begin{theorem}\label{T1}
For any nonnegative 
$u_0,u^*\in L^2(\Omega)$ with $u_0\neq0_{L^2(\Omega)},$ for every $\varepsilon>0,$ and any $T>0$  
there exists 
a 
piecewise static multiplicative control 
$v=
v (\varepsilon, T, u_0,u^*),\,
v\in L^\infty(Q_T),$ such that for the correspon\-ding solution 
 $u(x,t)$ of $(\ref{PS})$
we obtain
$$\|u(\cdot,T)-u^*\|_{L^2(\Omega)}< \varepsilon. 
$$
\end{theorem}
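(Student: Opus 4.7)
The plan is to reduce the semilinear problem to its linear surrogate $w_t = \Delta w + v(x,t) w$ and to solve that surrogate by large-amplitude short-duration piecewise static controls (Khapalov-style ``kicks''), then to absorb the nonlinear correction via a Duhamel/Gronwall argument on each static piece.

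\emph{Preliminary reductions.} Smooth functions strictly positive in $\Omega$ are dense in the nonnegative cone of $L^2(\Omega)$, so I may replace $u^*$ by a regular nonnegative target bounded below by a positive constant on each compact $K\Subset\Omega$, at a cost of at most $\varepsilon/3$. The assumptions $|f(u)|\le L|u|$ (from \eqref{1.2a}) and $f(0)=0$ (from \eqref{zero}) ensure that \eqref{PS} obeys a comparison principle for any bounded $v$; hence nonnegativity of $u_0$ is preserved, and $u(\cdot,t)$ becomes strictly positive in $\Omega$ after an arbitrarily short preliminary phase with $v\equiv 0$.

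\emph{Linear surrogate.} For $w_t = \Delta w + v(x,t) w$ with $w(0) = u_0 > 0$, the key device is the Khapalov kick: on a short interval $[0,\tau]$, the static choice $v(x,t) \equiv \tau^{-1}\alpha(x)$ yields, via a Trotter-type splitting estimate, $w(\cdot,\tau) = e^{\alpha(\cdot)} u_0 + o(1)$ in $L^2(\Omega)$ as $\tau \to 0$. Picking a bounded $\alpha$ so that $e^{\alpha} u_0$ is $\varepsilon/3$-close to $u^*$ in $L^2$ (legitimate by positivity of $u_0$ on compact subsets of $\Omega$ and by regularity of $u^*$) delivers the linear controllability at time $\tau$; a second piecewise static stage of length $T - \tau$ with a small-amplitude $v$ compensates the residual time evolution and lands in an $L^2$-neighborhood of $u^*$ at time $T$.

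\emph{Nonlinear correction and main obstacle.} Let $u$ solve \eqref{PS} driven by the $v$ just built and set $z := u - w$. By Duhamel on each static sub-interval $[t_{k-1},t_k]$ of length $\delta_k$ and the Lipschitz bound on $f$, one obtains $\|z(t_k)\| \le e^{(L+\|v_k\|_\infty)\delta_k}\|z(t_{k-1})\| + CL\delta_k e^{(L+\|v_k\|_\infty)\delta_k} \sup_{s\in[0,T]}\|u(s)\|$. The naive estimate is worthless because $\|v\|_\infty \sim \tau^{-1}$ on the kick; the crux of the argument is that on the kick interval the dimensionless product $\|v_k\|_\infty \delta_k = |\alpha|_\infty$ stays bounded independently of $\tau$, so the exponential factor is harmless, while on the low-amplitude stages $\|v_k\|_\infty$ is bounded and standard Gronwall applies with a contribution that can be shrunk by refining the partition (enlarging $m$). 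Summing the estimates yields $\|u(T) - w(T)\| < \varepsilon/3$, and combined with the $\varepsilon/3$ errors from the reductions and from the linear step, the triangle inequality finishes the proof. The piecewise static hypothesis of Definition \ref{piece} is exactly what makes this time-amplitude balance possible.
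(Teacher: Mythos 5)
Your basic mechanism is the same one the paper uses: static ``kicks'' $v=\alpha(x)/\tau$ on short intervals, with the dimensionless product $\|v\|_\infty\cdot\tau=\|\alpha\|_\infty$ kept bounded so that Gronwall factors are harmless and the diffusive/nonlinear corrections vanish as $\tau\to0$. But two steps of your plan do not hold up as written. The main one is the treatment of an arbitrary (possibly large) $T$. You compare the nonlinear solution $u$ with a single linear surrogate $w$ over the whole horizon and claim the nonlinear contribution ``can be shrunk by refining the partition (enlarging $m$)''. It cannot: summing your per-interval bound $CL\delta_k e^{(L+\|v_k\|_\infty)\delta_k}\sup_s\|u(s)\|$ over the partition gives a term of order $LT\,e^{C}\sup_s\|u(s)\|$, independent of $m$ --- the Duhamel source $\int_0^T\|f(u(s))\|\,ds$ does not get smaller when you cut $[0,T]$ into more pieces. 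So $\|u(T)-w(T)\|<\varepsilon/3$ is only available when $T$ itself is small. Similarly, the claim that a ``small-amplitude'' static stage of length $T-\tau$ ``compensates the residual time evolution'' and lands near $u^*$ is unsubstantiated: over a fixed long horizon the (linear or nonlinear) flow drifts a fixed distance from $u^*$, and holding the state near an arbitrary target requires repeated large-amplitude corrections, not a bounded $v$. The paper's route is different precisely here: it first proves the nonlinear steering in \emph{small} time (Theorem \ref{lem preserving} plus the first, constant-control step), where the smallness of the total duration kills the $\Delta u$ and $f(u)$ corrections, and then reaches any prescribed $T$ by iterating that nonlinear short-time steering on $n$ short sub-intervals of $[T_1,T]$ (controls of amplitude of order $n/(T-T_1)$), never invoking a Gronwall comparison with one linear flow over all of $[0,T]$.

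Two further points need repair even in the small-time regime. First, to have a \emph{bounded} $\alpha$ with $e^{\alpha}u_0\approx u^*$ you must cut the target off near $\partial\Omega$: after your preliminary phase the state satisfies homogeneous Dirichlet conditions, so the ratio $u^*/u_0$ blows up at the boundary unless the (approximate) target vanishes there; your reduction (target bounded below on compacts) goes in the wrong direction, whereas the paper introduces exactly this cutoff $u^*_\eta$, supported in $\Omega_{\eta/2}$, together with the strict-positivity approximation $u_0^\varepsilon$ and the continuous-dependence estimate of Corollary \ref{cor PDE2}. Second, your ``Trotter-type splitting estimate'' $w(\cdot,\tau)=e^{\alpha}u_0+o(1)$ for a sign-changing bounded $\alpha$ is asserted without proof, and it is the heart of the matter: the quantitative bound behind such a statement (the uniform $\|\Delta u\|_{L^2(Q_\tau)}$ estimate of Proposition \ref{lem PDE}) is proved only for $v\le0$, and the crude energy estimates fail when $\alpha$ has a positive part. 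This is why the paper factors the kick as a constant positive multiplier $M_\varepsilon$ (handled exactly by the Fourier expansion, since constants commute with $\Delta$) followed by a nonpositive kick falling under Theorem \ref{lem preserving}. A qualitative Trotter--Kato argument could salvage your linear claim, but as written this key step is a gap, and the quantitative control you need for the subsequent nonlinear comparison is not supplied.
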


Theorem \ref{T1} is proved in Section \ref{Proof_main}.\\

 It is useful the following remark.
\begin{remark}
We note that as a consequence of the assumptions \eqref{1.2a} and \eqref{zero} on the nonlinear function $f$ the following inequality holds
\begin{equation}\label{1.2b}
| f(u)| \leq L | u | ,\;\;\;\; \forall u \in \R,
\end{equation}
where $L$  is the Lipschitz constant in (\ref{1.2a}).
\end{remark}
We observe that the nonnegative control result, given in Theorem \ref{T1},  is consistent with the constraints given by the PDE in \eqref{PS}. Indeed, it also holds 
$$\frac{f(u)}{u} \in L^\infty (Q_T),$$ that follows from \eqref{1.2b}, thus
we can extend the strong maximum principle from linear parabolic PDEs (see, e.g, Chapter 2 in \cite{Fr}, p. 34) to semilinear parabolic problem \eqref{PS}, since the terms $v(x,t)u+f(u (x,t))$ can be written as $ \widetilde{v}(x,t) u (x,t) $,  where $  \widetilde{v} := v+\dfrac{f(u)}{u} \in L^\infty (Q_T).$\\
So, the strong maximum principle implies that the system \eqref{PS} cannot be steered anywhere from $u_0 \equiv 0$, and 
if $u_0 (x) \geq 0$ in $\Omega$,  then the corresponding solution to \eqref{PS} remains nonnegative at any moment of time, regardless of the possible choice of the multiplicative control $v$. 
This means the constraint that the system \eqref{PS} cannot be steered from any such nonnegative $u_0\in L^2(\Omega)$ to any target state $u^*\in L^2(\Omega)$ which is negative on a nonzero measure set in the space domain. \\

 To prove Theorem \ref{T1} we need an intermediate and crucial controllability result given in Theorem \ref{lem preserving}, obtained under further regularity assumptions and constraints on the initial and target states.
\begin{theorem}\label{lem preserving} 
Let $u_0,\,
u^*\in C^2 
(\Omega)$ 
such that $u_0(x)\neq0\: \text{ for every } x\in\Omega,$ and
\begin{equation}\label{ass}
\exists \nu>0:\;\nu\leq\frac{
u^*(x)}{u_{0}(x)}\leq 1 \quad  
\forall \,x\in 
\Omega\,.
\end{equation}
Then, for every $\ve>0$ and any $T>0$ 
there exists a static multiplicative control $v=v(\ve, T, u_{0}, \,u^*)
\in L^\infty(Q_T
),\, v=v(x),$
 such that
\begin{equation}\label{ineq pres}
\|u (\cdot, T) - 
u^*(\cdot)\|_{L^2(\Omega)}\leq \ve\,,
\end{equation}
 where $u$ is the corresponding solution to \eqref{PS} on $Q_T.$ Moreover, the static multiplicative control $v$ is the following function   
 $$ v(x,t)=\frac{v_0^*(x)}{T}\quad \forall (x,t)\in Q_T,$$ with 
 $\displaystyle v^*_0 (x):=\ln\left( \frac{
u^*(x)}{u_{0} (x) } \right),$ for every $x\in\Omega.$
\end{theorem}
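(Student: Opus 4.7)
The plan is to compare the PDE solution $u$ with the purely multiplicative trajectory that is engineered into the formula for $v$. Set
\[
z(x,t) := u_0(x)\,\exp\!\Big(\frac{v_0^*(x)}{T}\,t\Big),\qquad (x,t)\in Q_T,
\]
so that $z_t = v\,z$, $z(\cdot,0)=u_0$, and, by the very definition of $v_0^*$, $z(\cdot,T)=u^*$. The hypothesis $\nu\le u^*/u_0\le 1$ gives $v_0^*\le 0$ with $\|v_0^*\|_\infty\le|\ln\nu|$, while the $C^2$-regularity of $u_0,u^*$ furnishes $L^\infty(Q_T)$ bounds on $z$, $\nabla z$ and $\Delta z$ depending only on $\nu$ and on the $C^2$-norms of $u_0$ and $u^*$.

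Put $w := u - z$, so $w(\cdot,0)=0$ and $\|w(\cdot,T)\|=\|u(\cdot,T)-u^*\|$. Subtracting the ODE $z_t=v\,z$ from \eqref{PS} yields
\[
w_t \;=\; \Delta w + v\,w + \bigl(f(w+z)-f(z)\bigr) + \bigl(\Delta z + f(z)\bigr),
\]
a linear parabolic problem for $w$ with a Lipschitz perturbation of size $L\|w\|$ (by \eqref{1.2a}) and a known source $\Delta z + f(z)$, measuring exactly what the multiplicative-only model neglected.

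The heart of the proof is then a standard energy estimate. Testing against $w$, integrating in space, and invoking Young's inequality together with \eqref{1.2a}--\eqref{1.2b} and the bound $\|v\|_\infty\le |\ln\nu|/T$, I expect a differential inequality of the form
\[
\tfrac{d}{dt}\|w\|^2 \;\le\; C_1(T,\nu,L)\,\|w\|^2 \;+\; \|\Delta z\|^2 \;+\; \|f(z)\|^2,
\]
which, combined with $w(\cdot,0)=0$ and Gronwall's lemma, controls $\|w(\cdot,T)\|^2$ by $\int_0^T(\|\Delta z\|^2+\|f(z)\|^2)\,dt$ times an explicit exponential factor, entirely quantitative in $T$, $\nu$, $L$ and the $C^2$-data of $u_0,u^*$.

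The main obstacle I anticipate is that this single one-shot estimate is a fixed quantity and so not automatically below an arbitrary $\varepsilon$. I would overcome this by subdividing $[0,T]$ into $N$ equal sub-intervals and iterating the same ODE--PDE comparison piecewise: the diffusion and nonlinear contributions on each window of length $T/N$ are of order $(T/N)^2$, so summing $N$ such errors yields a total of order $T^2/N$, which is made smaller than $\varepsilon$ by choosing $N=N(\varepsilon)$ large. A secondary technicality is that $z(\cdot,t)$ need not vanish on $\partial\Omega$: the resulting boundary contributions in the integration by parts would be absorbed either by subtracting off a smooth extension of the trace of $z$ or by direct bounds using the $C^2$-regularity of the endpoints.
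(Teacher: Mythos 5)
Your one-shot comparison (the multiplicative trajectory $z=u_0e^{v_0^*t/T}$, the difference $w=u-z$ with source $\Delta z+f(z)$, Gronwall) is a legitimate variant of the paper's small-time argument, which instead writes, for each fixed $x$, the exact Duhamel formula $u(x,T)=u^*(x)+\int_0^T e^{v_0^*(x)(T-t)/T}(\Delta u+f(u))\,dt$ and bounds $\|\Delta u+f(u)\|_{L^2(Q_T)}$ by energy estimates on $u$ itself. But your plan for an \emph{arbitrary} $T$ has a genuine gap. If the control is the single static function $v_0^*/T$ on all of $[0,T]$, the final error is exactly that Duhamel integral; it is a fixed quantity that no repartitioning of the time interval in the \emph{analysis} can shrink, and for large $T$ it is genuinely not small (take $f\equiv 0$ and $u^*=u_0$, so $v\equiv 0$: then $u(\cdot,T)=e^{T\Delta}u_0$, which is far from $u_0$ for large $T$). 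Your bookkeeping also fails on its own terms: on a window of length $T/N$ the neglected source $\Delta z+f(z)$ is $O(1)$ (bounded by the $C^2$ data, not small), so the per-window error in $L^2$-norm is $O(T/N)$, and since the window errors add by the triangle inequality (not in squares), the total is $O(T)$, independent of $N$ --- the claimed total $T^2/N$ conflates squared errors with errors. The mechanism you are missing is the one the paper uses: the one-shot bound is $O(\sqrt{T})$ with a constant that stays bounded as $T\to 0^+$ despite the singular control (because $T\max|\Delta v|=\max|\Delta v_0^*|$), so the result holds for all sufficiently small $T$; for larger $T$ one must \emph{change the control}, re-aiming at $u^*$ from the current state over short subintervals (a stabilization argument using the continuous-dependence estimate of Corollary \ref{cor PDE2}), which yields a piecewise static control rather than a single static one.

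A second, smaller but real, issue is the boundary mismatch you flag and then wave off: $w=u-z$ does not vanish on $\partial\Omega$, so testing the $w$-equation with $w$ and integrating by parts produces the term $\int_{\partial\Omega}\partial_\nu w\,w$, which on the boundary equals $-\int_{\partial\Omega}(\partial_\nu u-\partial_\nu z)\,z$ and involves the normal derivative of the unknown solution; this is not controlled by your energy quantities, and ``subtracting a smooth extension of the trace of $z$'' changes the source and must be carried out explicitly. The paper avoids this entirely: it never tests against $u-z$, but uses the pointwise-in-$x$ Duhamel representation together with H\"older in time and the estimate $\|\Delta u\|_{L^2(Q_T)}\le C(L,T,v)\|u_0\|_{H^1_0(\Omega)}$ of Proposition \ref{lem PDE}, whose proof tests the equation for $u$ itself, where $u|_{\partial\Omega}=0$ makes all boundary terms vanish. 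So your route is repairable for small $T$, but as written both the boundary step and, more seriously, the passage to arbitrary $T$ do not go through.
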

\begin{remark}
Theorem \ref{lem preserving} includes the case $u_0$ and $u^*$ both strictly negative on $\Omega,$ under the condition \eqref{ass}. So, for this result it is just important that both initial and target state have the same sign.
\end{remark}

\subsection*{Structure of the paper.} 
We prove the main result, that is Theorem \ref{T1}, 
in Section \ref{Proof_main}. The  proof  of Theorem \ref{T1} need the intermediate and crucial result given in Theorem \ref{lem preserving}, that is proved in Section \ref{intermediate} together with some useful PDE estimates.

\subsection*{State of the art 
}\label{art}
The nonnegative approximate controllability results for reaction-diffusion equations are consistent with the strong maximum principle constraints.
In literature, for this kind of results we have to refer to the  pioneering papers by  A.Y. Khapalov, contained in the book \cite{KB}, where it is obtained for reaction diffusion equations the nonnegative approximate controllability  via multiplicative controls, before in large time, and after in small time under very strong assumptions on the initial and target states. In this paper, we are able to remove those strong constraints on the data in our general Theorem \ref{T1}.
Moreover, the proof of that theorem 
permits us to obtain the nonnegative controllability first in
 {\it arbitrary small time}, then at any time by an iteration argument. 
 This proof is inspired by the recent paper \cite{F3} of the author, where a similar result is proved in the unidimensional setting for degenerate reaction-diffusion equations. About nonnegative controllability results for degenerate parabolic equations we also mention \cite{F1}. Moreover,  in \cite{V} Vancostenoble proved a nonnegative controllability result in large time for a linear parabolic equation with singular potential, following the approach of \cite{CFproceedings1} and \cite{CF2}.
For completeness we need recall some recent results about the approximate multiplicative controllability for unidimensional reaction-diffusion equations between sign-changing states, see \cite{CFK},  
 by the author with Cannarsa and Khapalov 
 regarding a semilinear uniformly parabolic system, 
and \cite{FNT}, by the author with Nitsch and Trombetti, about degenerate parabolic equations. \\
Recently, it is also studied the exact controllability for evolution equations via bilinear controls.  See, e.g., \cite{ACU} and \cite{ACU2} by Alabau-Boussouira, Cannarsa and Urbani, \cite{CU1} by Cannarsa and Urbani, and \cite{DL} by Duprez and Lissy.\\
Finally,  an interesting work in progress, related to this paper is the problem of the approximate controllability via multiplicative control for nonlocal operators, applied to the fractional heat equation studied in \cite{BWZ} by Biccari, Warma and Zuazua. 

\section{Some PDE estimates and an intermediate goal}\label{intermediate}
We prove the crucial intermediate goal given in Theorem \ref{lem preserving} in Section \ref{second_result}. For the proof of Theorem \ref{lem preserving} and for that of the main result, that is Theorem \ref{T1}, we need before some general PDE estimates for the solution of the problem \eqref{PS}, that we give before in Section \ref{estimates}.
\subsection{Some PDE estimates}\label{estimates}
Let us start this section by the statement of Proposition \ref{lem PDE} that we prove immediately below. 
\begin{prop}\label{lem PDE} 
Let $T\in(0,\frac{1}{4L}].$ 
Let $u_0\,
\in 
H^1_0
(\Omega),$ $v\in C^2(Q_T)$ 
with $v(x,t)\leq0$ on $Q_T,$ and let
 $u$ 
 the corresponding unique solution to \eqref{PS}. Then, we have
 $$f(u)\in C ([0,T]; L^2 (\Omega))$$ and the following estimates hold:
  \begin{itemize}
 \item[$\star$] 
 $\quad\;\| u \|_{C ([0,T]; L^2 (\Omega))} \; \leq \; \sqrt{2} \|  u_{0} \|_{L^2 (\Omega)},
$
\item[$\star$] $\: \| f(u) \|_{C ([0,T]; L^2 (\Omega))} 
 \leq\sqrt{2} L \|  u_{0} \|_{L^2 (\Omega)},
$
\item[$\star$] $\;\;\:\,\quad\quad\ \| \Delta u \|_{L^2 (Q_T)}\leq C(L,T,v) \|  u_{0} \|_{H^1_0 (\Omega)},$
\end{itemize}
where $ L $  is the Lipschitz constant in (\ref{1.2a}) and 
$$C(L,T, v):=\sqrt{1 +
2T\max_{x \in \overline{\Omega}} |\Delta v|
+  2 L^2 T}\;.$$
\end{prop}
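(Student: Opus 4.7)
My approach is to derive all three bounds by standard energy estimates, testing \eqref{PS} against $u$ and against $-\Delta u$. The hypothesis $v\leq 0$ will kill or control the sign of the dangerous terms in both estimates, and the Lipschitz bound \eqref{1.2b} will let us close the estimates via Gr\"onwall and Young's inequality. The well-posedness recalled right after \eqref{PS} guarantees that, for $u_0\in H^1_0(\Omega)$, the solution $u$ lies in $H^1(0,T;L^2)\cap C([0,T];H^1_0)\cap L^2(0,T;H^2)$, so the test-function computations below make rigorous sense (otherwise one would approximate by Galerkin in the standard way).

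\textbf{The two $L^2$ bounds.} First, I would multiply \eqref{PS} by $u$, integrate over $\Omega$ and use $u|_{\partial\Omega}=0$ to obtain
\[
\tfrac{1}{2}\tfrac{d}{dt}\|u\|^2+\|\nabla u\|^2=\int_\Omega v u^2\,dx+\int_\Omega f(u)\,u\,dx.
\]
The first term on the right is nonpositive since $v\leq 0$, and by \eqref{1.2b} the second is bounded by $L\|u\|^2$. Dropping $\|\nabla u\|^2\geq 0$ and applying Gr\"onwall yields $\|u(t)\|^2\leq e^{2Lt}\|u_0\|^2$ for every $t\in[0,T]$. Because $T\leq 1/(4L)$ we have $2LT\leq 1/2$ and $e^{1/2}<2$, so $\|u(t)\|^2\leq 2\|u_0\|^2$, which is the first claim. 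The bound on $f(u)$ then follows at once by applying \eqref{1.2b} pointwise in time, and continuity of $t\mapsto f(u(\cdot,t))$ in $L^2(\Omega)$ is inherited from that of $u$ via $\|f(u(\cdot,t_1))-f(u(\cdot,t_2))\|\leq L\|u(\cdot,t_1)-u(\cdot,t_2)\|$.

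\textbf{The Laplacian bound.} Next I would multiply \eqref{PS} by $-\Delta u$ and integrate. The parabolic term produces $\frac{1}{2}\frac{d}{dt}\|\nabla u\|^2$ (since $u=0$ on $\partial\Omega$ forces $u_t=0$ there) and the principal part gives $\|\Delta u\|^2$. The delicate contribution is $-\int_\Omega v u\,\Delta u\,dx$: exploiting the identity $u\,\Delta u=\tfrac{1}{2}\Delta(u^2)-|\nabla u|^2$ and integrating by parts twice on $\int_\Omega v\,\Delta(u^2)\,dx$, with all boundary terms vanishing because both $u^2$ and $\partial_n(u^2)=2u\,\partial_n u$ vanish on $\partial\Omega$, I obtain
\[
-\int_\Omega v u\,\Delta u\,dx=-\tfrac{1}{2}\int_\Omega u^2\,\Delta v\,dx+\int_\Omega v|\nabla u|^2\,dx\leq \tfrac{1}{2}\max_{\overline{\Omega}}|\Delta v|\,\|u\|^2,
\]
where the sign condition $v\leq 0$ kills the $\int v|\nabla u|^2$ term. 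The remaining $-\int f(u)\Delta u$ is handled by Young's inequality, $\leq \tfrac12\|f(u)\|^2+\tfrac12\|\Delta u\|^2\leq \tfrac12 L^2\|u\|^2+\tfrac12\|\Delta u\|^2$, and the $\tfrac12\|\Delta u\|^2$ is absorbed on the left. Integrating in $t\in[0,T]$, dropping the nonnegative $\|\nabla u(T)\|^2$, and inserting the already-proved $\|u(t)\|^2\leq 2\|u_0\|^2$ yields
\[
\|\Delta u\|_{L^2(Q_T)}^2\leq \|\nabla u_0\|^2+2T\bigl(\max_{\overline{\Omega}}|\Delta v|+L^2\bigr)\|u_0\|^2\leq C(L,T,v)^2\,\|u_0\|_{H^1_0}^2,
\]
after using $\|u_0\|^2,\|\nabla u_0\|^2\leq \|u_0\|_{H^1_0}^2$.

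\textbf{Where the difficulty sits.} The only nontrivial step is the treatment of $-\int v u\,\Delta u$: a naive integration by parts would make the estimate depend on $\|\nabla v\|_\infty$ as well, and would not benefit from $v\leq 0$. The trick of rewriting $u\,\Delta u=\tfrac{1}{2}\Delta(u^2)-|\nabla u|^2$ transfers two derivatives onto $v$ so that only $\max|\Delta v|$ enters, and simultaneously isolates the term $\int v|\nabla u|^2$ whose sign is favorable. This is the key algebraic step that makes the constant $C(L,T,v)$ come out precisely as stated.
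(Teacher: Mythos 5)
Your proposal is correct and follows essentially the same route as the paper: energy estimates obtained by testing \eqref{PS} with $u$ and with $\Delta u$, using $v\leq 0$ to discard the terms $\int v u^2$ and $\int v|\nabla u|^2$, the Lipschitz bound \eqref{1.2b} for $f$, and two integrations by parts to transfer derivatives onto $v$ so that only $\max_{\overline\Omega}|\Delta v|$ enters, yielding exactly the constant $C(L,T,v)$. The only (harmless) differences are cosmetic: you close the first estimate via Gr\"onwall and $e^{2LT}\leq e^{1/2}<2$ instead of the paper's absorption of $2LT\|u\|^2_{C([0,T];L^2)}\leq\frac12\|u\|^2_{C([0,T];L^2)}$, and in the Laplacian bound you package the key step as the identity $u\,\Delta u=\frac12\Delta(u^2)-|\nabla u|^2$ rather than the paper's successive integrations by parts, which amount to the same computation.
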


\begin{proof}
We start this proof by evaluating $ \| u \|_{C ([0,T]; L^2 (\Omega))}.$
Since $ v (x,t) \leq 0$  for a.e. $(x,t)
\in Q_T,$ multiplying by $u$ the equation in (\ref{PS}), integrating by parts and using \eqref{1.2b}
 yields 
\begin{align*}
\frac{1}{2}\int_0^t \int_\Omega (u^2)_t dx ds =&\int_0^t \int_\Omega u\,u_t  dx ds \\
=& \;
\int_0^t \int_\Omega u\,\Delta u\, dx ds+
\int_0^t \int_\Omega v\, u^2 dx ds+\int_0^t  \int_\Omega   f(u)u \,dx ds
\\
\leq &
-\int_0^t \int_\Omega 	|\nabla u|^2\, dx ds+
L\int_0^T  \int_\Omega   u^2 dx dt
\leq L
\int_0^T  \int_\Omega   u^2 dx dt,
\end{align*}
where $ L $  is the Lipschitz constant in (\ref{1.2a}). Then, since $T\in \big(0,\frac{1}{4L}\big)$ we deduce
\begin{align*}\label{4.8}
\!\!\!\!\!\!\int_\Omega u^2 (x,t) dx &\leq \int_\Omega u_{0}^2 (x) dx + 
2L\int_0^T  \int_\Omega   u^2 dx dt\\
\!&\nonumber\leq \int_\Omega u_{0}^2 (x) dx  + 2L
T \|u\|^2_{C([0,T];L^2(\Omega))}\\
\nonumber&\leq  \|  u_{0} \|^2_{L^2 (\Omega)}+ \frac{1}{2} \|u\|^2_{C([0,T];L^2(\Omega))},\;\;t\in(0,T),
\end{align*}
thus,
\begin{equation}\label{4.10}
\| u \|_{C ([0,T]; L^2 (\Omega))} \; \leq \; \sqrt{2} \|  u_{0} \|_{L^2 (\Omega)},
\end{equation}
that proves the first estimate in the statement of Proposition \ref{lem PDE}. 

Now, we evaluate $ \| f(u) \|_{C ([0,T]; L^2 (\Omega))}$. 
From 
\eqref{1.2b}  and (\ref{4.10})
 it easy follows that $$f(u)\in C ([0,T]; L^2 (\Omega)),$$ and the following estimate holds
\begin{equation}\label{4.11}
 \| f(u) \|_{C ([0,T]; L^2 (\Omega))} \; \leq \; 
 L  \| u \|_{C ([0,T]; L^2 (\Omega))}\leq\sqrt{2} L \|  u_{0} \|_{L^2 (\Omega)},
\end{equation}
that proves the second estimate in Proposition \ref{lem PDE}.

Finally, we evaluate $ \| \Delta u \|_{L^2 (Q_T)}$. 
Multiplying by $\Delta u$ the equation in \eqref{PS}, 
integrating 
over $ Q_T,$ and applying Young's inequality we obtain
\begin{align*}
\parallel  \Delta u \parallel^2_{L^2 (Q_T)}&=\!\!\int_0^T \int_\Omega u_t \Delta u dx dt -
 \int_0^T \int_\Omega 
v u \Delta u dx dt-\int_0^T  \int_\Omega   f(u)\Delta u\, dx dt\\
&\leq \int_0^T \int_\Omega u_t \Delta u dx dt \,-
\int_0^T \int_\Omega v 
u \Delta u dx dt\\ 
&\qquad\qquad\qquad\qquad\quad+ \: 
\frac{1}{2}  \int_0^T  \int_\Omega   f^2(u) dx dt
+ \frac{1}{2}  \int_0^T  \int_\Omega   |\Delta u|^2
dx dt\, .
\end{align*}
Thus, integrating by parts, keeping in mind the boundary condition in \eqref{PS}, taking into account that the reaction term $v$ is such that $v(x,t)
\leq 0$ for every $(x,t)\in Q_T,$ and using the estimates \eqref{4.11} and \eqref{4.10}  
it follows
that
\begin{align*}
\|\Delta u \|^2_{L^2 (Q_T)}&\leq 2\int_0^T \int_\Omega u_t \Delta u dx dt \; - \;
2\int_0^T  \int_\Omega v\,
 u\, \Delta u dx dt 
\; + \:  \int_0^T  \int_\Omega   f^2(u) dx dt\\
&\leq \; - \int_0^T \int_\Omega (	|\nabla u|^2
)_t dx dt \; + \;
2\int_0^T  \int_\Omega 
v |\nabla u|^2 dx dt\\
&\qquad\qquad\qquad\quad+
 \int_0^T \int_\Omega \nabla v
\cdot \nabla(u^2)dx dt
\; + \: T\, \| f(u) \|^2_{C ([0,T]; L^2 (\Omega))}\\
&\leq \; \int_\Omega  |\nabla u_0|^2
 dx \;
 - \;
 \int_0^T  \int_\Omega \Delta 
v\,u^2 dx dt
\; + 2 L^2 T\|  u_{0} \|^2_{L^2 (\Omega)}
\\
&\leq
  \int_\Omega  |\nabla u_0|^2
 dx 
 +
\max_{x \in \overline{\Omega}} | \Delta v
  |
\int_0^T \int_\Omega   u^2 dx dt 
+  2 L^2 T\|  u_{0} \|^2_{L^2 (\Omega)}\\
&\leq
\|  \nabla u_{0} \|^2_{L^2 (\Omega)}
 +
\max_{x \in \overline{\Omega}} |\Delta v
  |
\int_0^T \| u \|^2_{C ([0,T]; L^2 (\Omega))} dt 
+  2 L^2 T\|  u_{0} \|^2_{L^2 (\Omega)}\\
&\leq
\left(1 +
2T\max_{x \in \overline{\Omega}} |\Delta v|
+  2 L^2 T\right)\|  u_{0} \|^2_{H^1_0 (\Omega)}.
\end{align*}
\end{proof}
In the following Proposition \ref{lem PDE2}, we generalize the first estimate in Proposition \ref{lem PDE} to the case of a general reaction coefficient $v\in L^\infty(Q_T).$
\begin{prop}\label{lem PDE2} 
Let $T>0.$ 
Let $u_0\in 
H^1_0
(\Omega),\,v\in L^\infty(Q_T),$ and let $u$ the corresponding unique solution to \eqref{PS}. Then, we have
 $$\| u \|_{C ([0,T]; L^2 (\Omega))} \; \leq \; e^{(L+\|v^+\|_\infty)T} \|  u_{0} \|_{L^2 (\Omega)},$$
where $ L $  is the Lipschitz constant in (\ref{1.2a}) and $v^+(x,t)=\max\{v(x,t),0\}$ is the positive part of $v.$
\end{prop}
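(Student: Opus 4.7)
The plan is to repeat the energy estimate from Proposition \ref{lem PDE}, but without using the sign condition $v\leq 0$ to throw away the reaction term; instead the positive part $v^+$ will produce a Gr\"onwall-type factor while the negative part can be dropped as in the previous proof.

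Concretely, I would first argue by density/regularization that it suffices to treat $u_0\in H^1_0(\Omega)$ and $v\in L^\infty(Q_T)$ smooth enough so that the computation $\frac{d}{dt}\int_\Omega u^2\,dx$ is justified; the final inequality passes to the limit because the bound depends on $v$ only through $\|v^+\|_\infty\le\|v\|_\infty$. Then, multiplying the equation in \eqref{PS} by $u$, integrating over $\Omega$ and using integration by parts together with the homogeneous Dirichlet boundary condition, I obtain
\begin{equation*}
\frac{1}{2}\frac{d}{dt}\int_\Omega u^2\,dx \;=\; -\int_\Omega |\nabla u|^2\,dx + \int_\Omega v\,u^2\,dx + \int_\Omega f(u)\,u\,dx.
\end{equation*}
The first term on the right is $\le 0$ and can be discarded. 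For the reaction term I use $v\le v^+\le \|v^+\|_\infty$ pointwise to get $\int_\Omega v u^2\le \|v^+\|_\infty \int_\Omega u^2$. For the nonlinear term I apply \eqref{1.2b} and Cauchy--Schwarz (or just $|f(u)u|\le L u^2$) to obtain $\int_\Omega f(u)u\,dx\le L\int_\Omega u^2\,dx$.

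Combining these bounds yields the scalar differential inequality
\begin{equation*}
\frac{d}{dt}\|u(\cdot,t)\|_{L^2(\Omega)}^2 \;\leq\; 2\bigl(L+\|v^+\|_\infty\bigr)\,\|u(\cdot,t)\|_{L^2(\Omega)}^2,\qquad t\in(0,T).
\end{equation*}
Gr\"onwall's lemma then gives $\|u(\cdot,t)\|_{L^2(\Omega)}^2 \le e^{2(L+\|v^+\|_\infty)t}\|u_0\|_{L^2(\Omega)}^2$ for every $t\in[0,T]$, and taking the square root and the supremum over $t\in[0,T]$ yields the claimed bound.

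There is no real obstacle in the argument; the only delicate point is the justification of the energy identity for $v$ merely in $L^\infty(Q_T)$ (rather than $C^2$ as in Proposition~\ref{lem PDE}). This is handled in the standard way by approximating $v$ by smooth functions $v_n$ with $\|v_n^+\|_\infty\le \|v^+\|_\infty$, writing the identity (and hence the inequality) for the corresponding solutions $u_n$, and passing to the limit using the well-posedness theory quoted after \eqref{PS}; the bound is preserved because its right-hand side is independent of the regularization. The point of using $v^+$ rather than $|v|$ or $\|v\|_\infty$ is precisely that the nonpositive part of $v$ only helps in the energy estimate and should not enter the exponential.
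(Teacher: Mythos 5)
Your proof is correct and follows essentially the same route as the paper: multiply the equation by $u$, integrate by parts, discard the gradient term, bound the reaction term by $\|v^+\|_\infty\int_\Omega u^2$ and the nonlinearity by $L\int_\Omega u^2$ via \eqref{1.2b}, then conclude with Gr\"onwall's inequality. The only difference is cosmetic (differential versus time-integrated form of the energy inequality, plus your extra remark on regularizing $v$), so no further comparison is needed.
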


\begin{proof}
%
Proceeding as in the proof of Proposition \ref{lem PDE}, that is multiplying by $u$ the equation in (\ref{PS}), integrating by parts and using \eqref{1.2b}
 we obtain 
\begin{align*}
\frac{1}{2}\int_0^t \int_\Omega (u^2)_t dx ds 
=& \;
\int_0^t \int_\Omega u\,\Delta u\, dx ds+
\int_0^t \int_\Omega v\, u^2 dx ds+\int_0^t  \int_\Omega   f(u)u \,dx ds
\\
\leq &
-\int_0^t \int_\Omega 	|\nabla u|^2\, dx ds+ \int_0^t \int_\Omega v^+\, u^2 dx ds+
L\int_0^t  \int_\Omega   u^2 dx dt\\
\leq &\; (L+\|v^+\|_\infty)
\int_0^t  \int_\Omega   u^2 dx ds,\;\;\;\;\forall t\in(0,T),
\end{align*}
then
$$\int_\Omega u^2(x,t)dx \leq \int_\Omega u_0^2(x)dx+(L+\|v^+\|_\infty)
\int_0^t  \int_\Omega   u^2 dx ds,\;\;\;\;\forall\, t\in(0,T).$$
Thus, applying Gr\"onwall's inequality we deduce 
$$
\|u(\cdot,t)\|^2_{L^2(\Omega)}
\leq e^{2(L+\|v^+\|_\infty)T}\|u_{0}\|^2_{L^2(\Omega)},\;t\in (0,T),
$$
from which it follows the conclusion of the proof.
%
\end{proof}
From Proposition \ref{lem PDE2} we can easy obtain the following Corollary \ref{cor PDE2}.
\begin{cor}\label{cor PDE2} 
Let $T>0.$ 
Let $u^0_1,\,u^0_2\in 
H^1_0
(\Omega),\,v\in L^\infty(Q_T),$ and let $u_1$ and $u_2$ the unique solution of \eqref{PS} corresponding  to $u_1^0$  and $u_2^0,$  respectively. Then, we have
 $$\| u_1-u_2 \|_{C ([0,T]; L^2 (\Omega))} \; \leq \; e^{(L+\|v^+\|_\infty)T} \|  u^0_{1}-u^0_{2} \|_{L^2 (\Omega)},$$
where $ L $  is the Lipschitz constant in (\ref{1.2a}) and $v^+(x,t)=\max\{v(x,t),0\}$ is the positive part of $v.$
\end{cor}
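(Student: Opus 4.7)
The plan is to mimic the energy argument from the proof of Proposition \ref{lem PDE2}, but applied to the difference $w := u_1 - u_2$ rather than to a single solution. Since both $u_1$ and $u_2$ share the same coefficient $v$ and the same semilinear term, subtracting the two equations gives that $w$ solves the linear inhomogeneous problem
\begin{equation*}
w_t \;=\; \Delta w \;+\; v(x,t)\,w \;+\; \bigl(f(u_1) - f(u_2)\bigr) \quad \text{in } Q_T,
\end{equation*}
with $w|_{\partial\Omega}=0$ and $w(\cdot,0) = u_1^0 - u_2^0 \in H^1_0(\Omega)$.

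First I would multiply the equation for $w$ by $w$ and integrate over $\Omega \times (0,t)$, performing an integration by parts on the Laplacian term and using the homogeneous Dirichlet condition to get $-\int |\nabla w|^2 \leq 0$. Next, in place of the bound $|f(u)|\leq L|u|$ from \eqref{1.2b} used in Proposition \ref{lem PDE2}, I use the Lipschitz assumption \eqref{1.2a} directly to obtain
\begin{equation*}
\int_\Omega \bigl(f(u_1) - f(u_2)\bigr)\,w \,dx \;\leq\; L \int_\Omega w^2\, dx,
\end{equation*}
while the reaction term is controlled by $\int_\Omega v\, w^2 \,dx \leq \|v^+\|_\infty \int_\Omega w^2\, dx$, exactly as in the proof of Proposition \ref{lem PDE2}.

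Combining these estimates yields
\begin{equation*}
\int_\Omega w^2(x,t)\,dx \;\leq\; \int_\Omega \bigl(u_1^0 - u_2^0\bigr)^2 dx \;+\; 2\bigl(L+\|v^+\|_\infty\bigr) \int_0^t \int_\Omega w^2 \,dx\,ds,
\end{equation*}
for every $t\in(0,T)$. An application of Gr\"onwall's inequality then gives $\|w(\cdot,t)\|^2_{L^2(\Omega)} \leq e^{2(L+\|v^+\|_\infty)T}\|u_1^0-u_2^0\|^2_{L^2(\Omega)}$, and taking the supremum over $t\in[0,T]$ yields the claimed bound in the $C([0,T];L^2(\Omega))$ norm.

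There is essentially no hard step: the Lipschitz assumption on $f$ together with $f(0)=0$ is precisely what makes the difference $w$ satisfy the same type of energy inequality as a single solution in Proposition \ref{lem PDE2}. The only point to note is that although $w$ does not solve the original semilinear problem \eqref{PS}, it solves a linear parabolic problem with bounded coefficient $v$ and right-hand side pointwise bounded by $L|w|$, which is exactly the structure needed to replicate the Gr\"onwall argument.
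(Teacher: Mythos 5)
Your proposal is correct and follows essentially the same route as the paper: the paper also sets $w:=u_1-u_2$, observes that $w$ solves the linear problem with right-hand side $f(u_1)-f(u_2)$, multiplies by $w$, integrates by parts, bounds the nonlinear term via \eqref{1.2a} and the reaction term via $\|v^+\|_\infty$, and concludes with Gr\"onwall's inequality. No gaps.
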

\begin{proof} Let set
$w:=u_1-u_2,$ we note that $w$ satisfy the following Cauchy-Dirichlet     
\begin{equation}\label{PSw}
\begin{cases}
\quad   w_t \; = \; \Delta w
 + \; v(x,t)  w \; + \; f(u_1)-f(u_2)
&\quad  {\rm in} \;\;\;Q_T = \Omega \times (0, T)\,, 
\\
\quad w\mid_{\partial\Omega} = 0,
&\quad\qquad\;\;\; t \in (0, T),
\\
\quad w\mid_{t = 0} \; = u_1^0-u_2^0\in H^1_0( \Omega). 
\end{cases}
\end{equation}
Following the same idea of the proof of Proposition \ref{lem PDE2}, that is 
multiplying by $w$ the equation in (\ref{PSw}), integrating by parts and using \eqref{1.2a}
 we obtain 
\begin{align*}
\frac{1}{2}\int_\Omega w^2(x,t) dx 
=&\frac{1}{2}\int_\Omega w^2(x,0) dx+
\int_0^t \int_\Omega w\,\Delta w\, dx ds\\&
\quad\quad\;+\int_0^t \int_\Omega v\, w^2 dx ds
+\int_0^t  \int_\Omega  \left(f(u_1)-f(u_2)\right)w \,dx ds
\\
\leq &  \frac{1}{2}\|u_1^0-u_2^0\|^2_{L^2(\Omega)}
+ (L+\|v^+\|_\infty)
\int_0^t  \int_\Omega   w^2 dx ds,\;\;\;\;\forall t\in(0,T),
\end{align*}
Then, applying Gr\"onwall's inequality we obtain 
$$
\|u_1(\cdot,t)-u_2(\cdot,t)\|^2_{L^2(\Omega)}
\leq e^{2(L+\|v^+\|_\infty)T}\|u_1^{0}-u_2^{0}\|^2_{L^2(\Omega)},\;\;\;\,\forall t\in (0,T),
$$
from which the conclusion.
\end{proof}
\subsection{The proof of Theorem \ref{lem preserving} }\label{second_result}

Now, we are ready to give the proof of Theorem \ref{lem preserving} .

\begin{proof}[Proof of Theorem \ref{lem preserving}.]
Let us set $\displaystyle v^*_0 (x):=\ln\left( \frac{
u^*(x)}{u_{0} (x) } \right),$ for every $x\in\Omega.$  From \eqref{ass} we note that 
$v^*_0\in L^\infty (\Omega)$  
and for the static multiplicative control $v$ we have
\begin{equation}\label{sign}
v (x,t):= \frac{v^*_0 (x)}{T}\leq0\;\;\; \text{ for every } (x,t)\in Q_T.
\end{equation}
Then, let us compute the solution $ u $ of \eqref{PS} at time $T$, corresponding to the previous choice of the reaction coefficient $v,$ using the following representation. 
\begin{equation}\label{WT}
u(x,T) \; = \; 
u^*(x)  \; + \int_0^T e^{v^*_0 (x) \frac{(T - t)}{T}} (\Delta u (x, \tau) + f (u(x,\tau))) dt,\quad \forall x\in\Omega.
\end{equation}

The formula \eqref{WT} is obtained in the following way.
For every fixed $\bar{x}\in \Omega,$ let us consider the non-homogeneous first-order ODE
 $$u^\prime(\bar{x},t)=\frac{v^*_0 (\bar{x})}{T}  u(\bar{x},t) \; +\big( \Delta u(\bar{x},t)+f(u(\bar{x},t))\big)\;\;\;\; t\in (0,T),$$
  associated to \eqref{PS}.
Then, we easy obtain that the 
solution $u$ 
has the following representation formula
$$
u(x,t) \; = \; e^{v^*_0 (x) \frac{t}{T}} u_{0} (x)  \; + \; 
\int_0^t e^{v^*_0 (x) \frac{(t - \tau)}{T}} (\Delta u(x, \tau) + f (u(x, \tau)))d \tau,\,\;\;\;\forall (x,t)\in Q_T,
$$
so for $ t = T$ we obtain \eqref{WT}.

From \eqref{WT}, using \eqref{sign} and H\"older's inequality, and applying the estimates in Proposition \ref{lem PDE}, 
we deduce the following inequalities 
\begin{align}\label{uTf}
\|u(x,T)-
u^* (x)&\|^2_{\! L^2(\Omega)}\!\!=\!\!\!
\int_\Omega \left(\int_0^T e^{v^*_{0} (x)
\frac{(T - \tau)}{T}} (\Delta u(x, \tau) + f (u(x,\tau))) d \tau \!\!\right)^2\!\!\!\!dx
\\
\nonumber&\leq T \|\Delta u  + f (u)\|^2_{L^2 (Q_T)}\\\nonumber &\leq 2\,T\left(\| \Delta u\|_{L^2 (Q_T)}^2+\|f (u)\|_{L^2 (Q_T)}^2
\right)\\
\nonumber &\leq 2\,T\left[\left(1 +
2T\max_{x \in \overline{\Omega}} |\Delta v|
+  2 L^2 T\right)+2 L^2T
\right]\|u_0\|^2_{H^1_0(\Omega)}.\\
\nonumber &= 2\,T\left(1 +
2\max_{x \in \overline{\Omega}} |\Delta v_0^*|
+  4 L^2 T
\right)\|u_0\|^2_{H^1_0(\Omega)}\,.
\end{align}
We note that in the last equality we have done the replacement $v=\frac{v_0^*}{T}$ in the constant $C(L,T,v)$ of Proposition \ref{lem PDE} and for the sequel it is crucial that constant (and consequently the last member of \eqref{uTf}) it is still bounded, as $ T \rightarrow 0^+$, also with that choice of the \lq\lq {\it singular}'' (in {\it T}) reaction coefficient $v$.  

Finally, fixed $\ve>0,$ since the last side of (\ref{uTf}) goes to zero as $ T \rightarrow 0^+$, there exists $T_0^*\in(0,\frac{1}{4L}),\, T_0^*=T_0^*(\ve,v_0^*)$ such that for every $T\in (0,T_0^*]$ we obtain
$$\|u(\cdot,T)-
u^* \|^2_{\! L^2(\Omega)}\!\!< \ve,$$
that is the approximate controllability at any time $T\in(0,T^*]$.\\
%
%
Furthermore, if $T>T^*_0$ we can prove the approximate controllability at time $T,$ using an argumentation introduced by the author in \cite{F3} (see the proof of Theorem 1.4 in that paper) for unidimensional degenerate reaction-diffusion equations.
So, we first obtain the ap\-proximate controllability at time $T^*_0.$ Then, we restart at time $T^*_0$ from a state close to $u^*,$ and we stabilize the system into the neighborhood of $u^*,$ applying the above strategy 
overall $n$ times, for some $n\in\N,$ on $n$ small time intervals by measure $\frac{T-T^*_0}{n},$ steering the system in any interval from a suitable approximation of $u^*$ to $u^*$.
\end{proof}

\section{The proof of the main result}\label{Proof_main}
%
Let us give the proof of the main result of this paper, Theorem \ref{T1}.
\begin{proof}{(Proof of Theorem \ref{T1}).}
\noindent Let fix $\ve>0$. 
Since $u_{0},\,u^*\in L^2(\Omega)$ by approximating  there exist
$u^\ve_{0},\,u^*_\ve \in C^2(\overline{\Omega})$ 
 such that: 
 \begin{itemize}
\item[$\star$] $
u^\ve_0,\;u^*_\ve>0\, \text{ on }\,
\Omega,\;\;
$ 
and the quotient function
$
\dfrac{u^*_\ve }{u_{0}^\ve}$
is bounded on 
$\Omega,
$
that is, 
\begin{equation}\label{Sve}
\exists\; M_\ve:=M(\ve, u_{0},u^*)>0\;:\;  
0< \frac{u^*_\ve (x)}{u_{0}^\ve (x)}\leq M_\ve,\qquad \forall x\in\Omega,
\end{equation}
for the following is useful to choose directly an upper bound $M_\ve>1;$ 
\item[$\star$] $u^*_\ve$ and $u^\ve_0$ satisfy the following approximation conditions
 \begin{equation}\label{5.1}
\| u^*_\ve-u^*
 \|
 <\frac{\ve}{4} \;\;\text{  and  }
 \;\; \| u_{0}^\ve-u_{0} \|
<
 \frac{\ve}{16 e^{L}M_\ve }\,,
\end{equation}
where $L$ is the Lipschitz constant in $\eqref{1.2a}.$
\end{itemize}

For every $x\in\Omega,$ let us define $\displaystyle\dist(x,\partial\Omega):=\inf_{y\in\partial\Omega}|x-y|\,(>0).$ 
Let $\displaystyle 
\eta>0$ 
such that 
the following set 
$$\displaystyle \Omega_{\eta}:=
\left\{x\in\Omega: \dist(x,\partial\Omega)>\eta\right\}
$$
is a not empty open set, then we have $ \overline{\Omega}_{\eta}\subset\Omega.$\\
From \eqref{Sve} we deduce that 
$$
0<\min_{x\in\overline{\Omega}_{\eta}}
\Big\{\frac{u^*_\ve(x)}{u^\ve_{0} (x)}\Big\}
\leq
\frac{u^*_\ve(x)}{u^\ve_{0} (x)}\leq M_\ve, \;\;\quad \forall x\in\overline{\Omega}_\eta,
$$
then, there exists $\nu(\eta)>0$ $\left(\displaystyle\nu(\eta):=\frac{1}{M_\ve}\min_{x\in\overline{\Omega}_{\eta}}
\Big\{\frac{u^*_\ve(x)}{u^\ve_{0} (x)}\Big\}\right)$ such that 
\begin{equation}\label{ass2}
\nu(\eta)
\leq
\frac{u^*_\ve(x)}{M_\ve\,u^\ve_{0} (x)}\leq 1,\;\;\quad \forall x\in\overline{\Omega}_\eta\,.
\end{equation} 

Let note that \eqref{ass2} has the same structure of the assumption \eqref{ass} in Theorem \ref{lem preserving}, so it is natural to proceed by the following strategy 
consisting in two control actions: in the first step we drive the system from the initial state $u_0$ to the intermediate target state $M_\ve u^\ve_0,$ then in the second step we steer the system from this intermediate state to $u^*,$ using the crucial Theorem \ref{lem preserving}.

For that we consider a further approximation of $u^*,$ indeed there exist\\ $
\eta=\eta(\ve)>0$ and $u^*_{\eta}, v^\eta_0 \in C^2(\overline{\Omega})$ with $v^\eta_0 \leq0$ such that:
\begin{enumerate}
\item[$(i)$]
$
u_\eta^*(x)\; = \;  \left\{ \begin{array}{ll}
u^*_\ve (x), \;\;\;\; &  x \in \Omega_\eta,\\ 
0, \;\;\;\; &  
x \in \overline{(\Omega\backslash\Omega_{\frac{\eta}{2}})} 
 ,\\
\end{array}
\right.   
$  \;\;\;\;\;
and
\begin{equation}\label{approx u}
\displaystyle 
\| u^*_{\eta} -u^*_\ve \|_{L^2 (\Omega)}<\frac{\ve}{4},
\end{equation}
\item[$(ii)$]
$v^\eta_0 (x) = \left\{ \begin{array}{ll}
\ln \left( 
\frac{u^*_\ve(x)}{M_\ve\,u^\ve_{0} (x)} \right), \;\;\;\; &  x \in \Omega_{\eta},\\
0, \;\;\;\; &  x \in \overline{(\Omega\backslash\Omega_{\frac{\eta}{2}})}\,, \\
\end{array}
\right.   
$
\end{enumerate}
where $\displaystyle \Omega_{\frac{\eta}{2}}:=
\left\{x\in\Omega: \dist(x,\partial\Omega)>\frac{\eta}{2}\right\}. 
$

Now, keeping in mind that $M_\ve>1,$ we can select the positive constant reaction coefficient 
\begin{equation}\label{bilinear}
v_1 := \frac{\log M_\ve}{T_1}>0, \;\;\;\;\; \forall\,(x,t) \in \overline{\Omega}\times(0, T_1), \;\;\;\text{ for some } T_1>0.
\end{equation} 
Then, let us choose the  2-steps piecewise static (see Definition \ref{piece}) multiplicative control
\begin{equation}\label{piecew}
v(x,t) \; = \;  \left\{ \begin{array}{ll}
\;\;\;v_1\,, \;\;\;\; &  (x,t) \in \overline{\Omega}\times(0, T_1),\\
\\
\dfrac{v^\eta_0(x)}{T-T_1}\,, \;\;\;\; &  (x,t) \in \overline{\Omega}\times(T_1, T)\,, \\
\end{array}
\right.   
\end{equation}
where $T_1$ and $T$ will be determined below.\\
Let $u$ be the solution to \eqref{PS} corresponding to the above choice of the multiplicative control $v$ and to the initial state $u_0.$

%
%
{\it Steering the system from $u^\ve_{0}
$ to $M_\ve u^\ve_{0},$ at some time $T_1>0.$}
Let us denote by $u^\ve(x,t)$ 
the solution 
of \eqref{PS} with initial state $u_0^\ve$. 
Thus, 
the solution $u^\ve(x,t)$, at some time $T_1,$ is represented in Fourier series in the following way
\begin{equation}\label{w conv}
u^\ve (x,T_1)
= \; 
e^{v_1 T_1}\,\sum_{k = 1}^\infty   e^{-\lambda_k T_1} 
\langle u^\ve_0,\vf_k\rangle
   \vf_k(x)
+ F_\ve(x,T_1)\,
\end{equation}
with \;$\displaystyle F_\ve(x,T_1):= \sum_{k = 1}^\infty   \left[\int_0^{T_1} e^{(v_1 -\lambda_k ) (T_1 -t)} 
\langle 
f (u^\ve(\cdot, t)),\vf_k\rangle
dt
  \right]\vf_k(x)
,$\\ where
$\{-\lambda_k\}_{k\in\N}$
are the eigenvalues of the Laplacian operator $A_0 u:=\Delta u$ (we note that $\lambda_k\geq0$ and $\lambda_k\leq\lambda_{k+1},$ for every $k\in\N,$ and $\lambda_k\rightarrow+\infty,$ as $k\rightarrow+\infty$),
and $\{\varphi_k\}_{k\in\N}$ are the corresponding eigenfunctions 
 that form a complete orthonormal system in $L^2(\Omega).$ 
 We trivially remark that  the eigenvalues of the operator 
 $Au:=\Delta u+v_1u$ are obtained by a shift, that is we have $\{-\lambda_k+v_1\}_{k\in\N},$ and the corresponding eigenfunctions are the same 
 $\{\varphi_k\}_{p\in\N}$.\\
 By the strong continuity semigroup property of the heat equation, 
we deduce 
$$
 \sum_{k = 1}^\infty   e^{-\lambda_k T_1} 
  \langle u^\ve_0,\vf_k\rangle \vf_k(x)\longrightarrow u^\ve_0
   \;\;\;\; \text{ in } L^2(-1,1) \;\;  \text{ as }\;\; T_1\rightarrow0.
 $$
 So, there exists a small time $T'_1\in(0,1),\; T'_1=T'_1(\ve,u_0,u^*),
$ such that, keeping also in mind that $e^{v_1T_1}=M_\ve$ by \eqref{bilinear}, we deduce
 \begin{equation}\label{R1}
\| 
u^\ve(\cdot,T_1)
-M_\ve u^\ve_0(\cdot)
\|<\frac{\ve 
}{32}+\|F_\ve(\cdot,T_1)\|,\qquad \forall\: T_1\in(0,T'_1
].
\end{equation}
Using H\"older's inequality, Parseval's identity, the inequality \eqref{1.2b}, and Proposition \ref{lem PDE2} we deduce
\begin{align}\label{H1}
\|&F_\ve(x,T_1)\|^2
=
 \sum_{k = 1}^\infty \left| \int_0^{T_1} e^{(v_1 -\lambda_k) (T_1 -t)}
 \langle 
f (u^\ve(\cdot, t)),\vf_k\rangle
 dt
\right|^2
\\
&\leq  \,
 \sum_{k = 1}^\infty \left( \int_0^{T_1}  e^{2(v_1 -\lambda_k) (T_1 -t)}dt
\right)\cdot\left(\int_0^{T_1}\left|
\langle 
f (u^\ve(\cdot, t)),\vf_k\rangle
\right|^2\!dt\right) 
\nonumber\\
 &\leq
 e^{2v_1T_1}T_1
\int_0^{T_1}\sum_{k = 1}^\infty \left|\langle 
f (u^\ve(\cdot, t)),\vf_k\rangle
\right|^2\!dt 
=M_\ve^2 
T_1 
\int_0^{T_1} \|f (u^\ve(\cdot, t))\|^2
dt 
\nonumber\\
&\leq M_\ve^2 
T_1 
\int_0^{T_1} \|u^\ve\|^2
dt\leq 
c(T_1) L M_\ve^2 
T_1\|u^\ve_{0}\|^{2}_{L^2 (\Omega)
}\,,\nonumber
\end{align}
From \eqref{R1} using \eqref{H1} it follows that there exists $T^*_1\in (0, T'_1],\, T^*_1=T^*_1(\ve,u_0,u^*),$ such that
\begin{equation}\label{small}
\| 
u^\ve(\cdot,T_1)
-M_\ve u^\ve_0(\cdot)
\|<\frac{\ve 
}{32}+\|F_\ve(\cdot,T_1)\|\leq\frac{\ve}{16},\;\;\qquad\forall\: T_1\in(0,T^*_1].
\end{equation}
Using Corollary \ref{cor PDE2} and the inequaity \eqref{small}, keeping in mind \eqref{bilinear} and \eqref{5.1}, for every $T_1\in(0,T^*_1],$
we obtain 
\begin{align}\label{5.10}\|u (\cdot, T_1) -& M_\ve u^\ve_0 (\cdot)\|
\leq \|u (\cdot, T_1) - u^\ve (\cdot, T_1)\|
+\|u^\ve (\cdot, T_1) - M_\ve u^\ve_0 (\cdot)\|\\
\nonumber&< e^{(L+\|v_1^+\|_\infty)T_1}\|u_0-u_0^\ve\|+ \frac{\ve}{16}
\leq e^{(L+v_1T_1)}\frac{\ve}{16e^LM_\ve}
+ \frac{\ve}{16}=\frac{\ve}{8}.
\end{align}
Let set 
$\d_0^\ve(x):=u(x, T_1) - M_\ve u^\ve_0 (x),$ 
by \eqref{5.10} we have
\begin{equation}\label{sigma}
\|\delta_0^\ve\|<\frac{\ve}{4\sqrt{2}}\,.
\end{equation}

{\it Steering the system from $M_\ve u^\ve_{0}+\delta_0^\ve
$ (at time $T_1>0)$ to $u^*$ at time $T,$ for $T>T_1.$}
In this step let us restart at time $T_1$ from the intermediate state $M_\ve u^\ve_{0}+\d_0^\ve$ and our goal is to steer the system arbitrarily close to $u^*.$ \\
%
Let us consider the following semilinear Dirichlet problem
 \begin{equation}\label{PSfinal}
\left\{\begin{array}{l}
\displaystyle{u_t-\Delta u=
\dfrac{v^\eta_0(x)}{T-T_1} u+ f(u)\;\;\;\; \mbox{ in } \; \widetilde{Q}_T:=\Omega\times(T_1,T)
}\\ [2.5ex]
\displaystyle{u\mid_{\partial\Omega} = 0,\qquad\qquad\quad\quad\qquad\quad\quad\;\quad\quad\,\;\;\;\: t\in (T_1,T)
}
~,
\end{array}\right.
\end{equation}
and we denote by $\tilde{u}(x,t)$ the unique solution to
 \eqref{PSfinal} with 
the initial condition  $\tilde{u}(x,T_1)=M_\ve u^\ve_0 (x).$ Of course, the restriction on $\widetilde{Q}_T$ of the solution $u$ of \eqref{PS}, 
corresponding to the multiplicative control $v$, given in \eqref{piecew}, and to the initial state $u_0,$ solves \eqref{PSfinal} with the initial state $u(x,T_1)=M_\ve u^\ve_0 (x)+\d_0^\ve(x).$
%
Since the inequality \eqref{Sve} holds we can apply Theorem \ref{lem preserving} to steer the system \eqref{PSfinal} from $M_\ve u^\ve_{0}$ to the approximation $u^*_\eta,$ thus, for $T>T_1$ we have 
  \begin{equation}\label{bound2}
  \!\!\!\!\!\!\|\widetilde{u} (\cdot,T)-
u^*_\eta (\cdot)\|<\frac{\ve}{4}.
   \end{equation}
 Then, using Corollary \ref{cor PDE2} (see also Proposition \ref{lem PDE}), from \eqref{bound2}, \eqref{approx u}, \eqref{5.1}, and \eqref{sigma}, for any $T$ such that $T-T_1>0$ is sufficiently small, there exists
\begin{align*}
\|  u(\cdot,T) -u^*(\cdot) \|
&\leq  \|  u(\cdot,T) - u^*_\eta(\cdot) \|
+\|    u^*_\eta -u^*  \|
\\
&\leq \|u(\cdot,T)-\widetilde{u}(\cdot,T)\|+ \|  \widetilde{u}(\cdot,T) - u^*_\eta(\cdot) \|
+\|    u^*_\eta -u^*_\ve  \|
+\|    u^*_\ve -u^*  \|
\\
&< \sqrt{2}\|M_\ve u^\ve_{0}+\d_0^\ve-M_\ve u^\ve_{0}\|+\frac{3}{4}\ve<\ve\,.
\end{align*}
 From which it follows the conclusion and  the approximate controllability at any time $T>0,$ using the same approach of the end of the proof of Theorem \ref{lem preserving}.
%
\end{proof}

\section*{Acknowledgment}
This work was supported by the Istituto Nazionale di Alta Matematica (IN$\delta$AM),
through the GNAMPA-IN$\delta$AM Research Project 2020, by the title (in Italian language) \lq\lq Problemi inversi e di controllo per equazioni di evoluzione e loro applicazioni'', coordinator Giuseppe Floridia.
Moreover, this research was performed in the framework of the 
French-German-Italian Laboratoire International Associ\'e (LIA), named COPDESC, on Applied Analysis,  issued by CNRS, MPI and IN$\delta$AM.


\end{document}